\documentclass[11pt,
]{amsart}
\usepackage{
amssymb, graphicx
}
 \usepackage[foot]{amsaddr}
\usepackage{amsthm}
\usepackage{graphicx,color}
%
\newtheorem{theorem}{Theorem}
\newtheorem{lemma}{Lemma}
\newtheorem{proposition}{Proposition}

\newtheorem{remark}{Remark}

\theoremstyle{remark}


\setlength{\oddsidemargin}{0.0in}
\setlength{\evensidemargin}{0.0in}
\setlength{\textwidth}{6.5in}
\setlength{\topmargin}{0.0in}
\setlength{\textheight}{8.5in}



\newcommand{\be}[1]{\begin{equation}\label{#1}} 
\newcommand{\ee}{\end{equation}} 

\newcommand{\p}{\partial}

\DeclareMathOperator{\vnabla}{\stackrel{{\tiny \mathtt{v}}}{\nabla}}
\DeclareMathOperator{\hnabla}{\stackrel{{\tiny \mathtt{h}}}{\nabla}}
\DeclareMathOperator{\vdiv}{\stackrel{{\tiny \mathtt{v}}}{\mathrm{div}}}
\DeclareMathOperator{\hdiv}{\stackrel{{\tiny \mathtt{h}}}{\mathrm{div}}}

\title[Notes on tensor tomography]{Notes on tensor tomography on Riemannian manifolds with boundaries}


  \author[J. Zhai]{Jian Zhai}
\address{J. Zhai: Institute for Advanced Study,
  The Hong Kong University of Science and Technology, Kowloon, Hong Kong, China
  (\tt{iasjzhai@ust.hk}).}

\begin{document}

\begin{abstract}
We summarize the proofs for the $s$-injectivity of the tensor tomography problem on compact Riemannian manifolds with boundaries in [Dairbekov, Inverse Problems, 22: 431, 2006] and [Paternain-Salo-Uhlmann, Math. Ann., 363: 305-362, 2015] under certain geometric assumptions.
\color{black}
\end{abstract}

\maketitle 
\section{Introduction}
Let $(M,g)$ be a compact simple $d$-dimensional Riemannian manifold with boundary $\partial M$. Recall that a compact Riemannian manifold with boundary is called simple if the boundary $\partial M$ is strictly convex, and the exponential map $\exp_p:T_pM\supset\exp_p^{-1}(M)\rightarrow M$ is a diffeomorphism for any $p\in M$.
We use
the notation $TM$ for the tangent bundle of $M$,
\color{black}
and  $S M$ for the unit sphere bundle, defined as
$SM=\{(x,v)\in TM;\,|v|_g=1\}$. Denote $\partial_+(SM)=\{(x,v)\in
SM;\, x\in\partial M,\,\langle v,\nu\rangle_g < 0\}$, where $\nu$ is the unit outer normal vector field to the boundary. 

The geodesic ray transform $I_mf $
of a smooth symmetric $m$-tensor $f$ is given by the following formula
\begin{equation}
\label{raytransform}
I_mf(x,v)=\int_{0}^{\tau(x,v)}  f_{i_1\cdots i_m}(\gamma(t))\dot\gamma(t)^{i_1}\cdots\dot\gamma(t)^{i_m}\mathrm{d}t,
\end{equation}
where $(x,v)\in\p_+(SM)$ and $\gamma(t)=\gamma_{x,v}(t)$ is the unit-speed geodesic
given by the initial conditions $(\gamma(0),\dot{\gamma}(0))=(x,v)$. Here $\tau(x,v)$ is the time when the geodesic $\gamma_{x,v}$ exits the manifold $M$.

The geodesic ray transform for the case $m=2$ arises from the linearization of the \textit{boundary rigidity} and \textit{lens rigidity} problem. We refer to \cite{merry2011inverse} for detailed explanations. If one only considers the rigidity problems in the same conformal class, the problem reduces to the case $m=0$. For the case $m=1$, an important application is the ultrasound Doppler tomography \cite{juhlin1992principles}.
The case $m=4$ is related with the elastic \textit{qP}-wave travel-time tomography in weakly anisotropic medium \cite{vcerveny1982linearized,Shara}. See \cite{paternain2014tensor,ilmavirta2019integral} for an overview of tensor tomography problems.\\

It is clear that if $h$ is a symmetric $(m-1)$-tensor with $h\vert_{\partial M}=0$, then $I_m(\mathrm{d}^sh)=0$. Here $\mathrm{d}^s=\sigma\nabla$, where $\nabla$ is the Levi-Civita connection with respect to $g$ and $\sigma$ is the symmetrization. The operator $I_m$ is said to be $s$-injective if $I_mf=0$ implies that $f=\mathrm{d}^sh$ for some $h$ satisfying $h\vert_{\partial M}=0$.

The $s$-injectivity of $I_m$ has been studied extensively under the simplicity assumptions on $(M,g)$. For $d\geq 2$, the injectivity of $I_0$ was established in \cite{Muk2,Muk1}, and $s$-injectivity of
  $I_1$ in \cite{AR}. In dimension $d=2$, it was proved in \cite{paternain2013tensor} that $I_m$ is $s$-injective for arbitrary $m$, while for the case $m=2$ it was proved earlier in \cite{Shara1} following the proof for the boundary rigidity problem in \cite{PU2}. In higher dimensions $d\geq 3$, the problem for $m\geq 2$ remains to be open. It was however shown in \cite{SU3} that the $s$-injectivity of $I_2$ is true for a generic simple metric $g$. Under stronger geometric conditions, such as an explicit upper bound of the sectional curvature, the $s$-injectivity is also proved \cite{PS,Shara,dairbekov2006integral,paternain2015invariant}. We will summarize the work in \cite{dairbekov2006integral,paternain2015invariant}, which establish the $s$-injectivity under some condition sufficiently implied by an explicit curvature bound \cite[Proposition 1.4]{dairbekov2006integral}. Let us introduce such condition in the following.

Following \cite{pestov2003well,paternain2015invariant}, we say that $(M,g)$ is free of $\beta$-conjugate points if, for any geodesic $\gamma$ and $J$ a nonzero vector field along $\gamma$ satisfying the equation
\begin{equation}\label{jacobi_eq}
D_t^2J+\beta R(J,\dot{\gamma})\dot{\gamma}=0
\end{equation}
with $J(0)=0$, then $J$ only vanishes at $t=0$. Here $D_t$ is the covariant derivative and $R$ is the Riemann curvature tensor. It is worth mentioning that if $(M,g)$ is free of $\beta_0$-conjugate points, it is also free of $\beta$-conjugate points for any $\beta\in [0,\beta_0)$. Note that a simple manifold is free of $1$-conjugate points, since the equation \eqref{jacobi_eq} with $\beta=1$ is the usual Jacobi equation. If $(M,g)$ has non-positive curvature, then it is free of $\beta$-conjugate points for any $\beta\geq 0$. See \cite{paternain2015invariant} for more discussions.

Let $\beta^{(1)}_{d,m}=\frac{m(d+m)}{d+2m-1}$ and $\beta^{(2)}_{d,m}=\frac{m(d+m-1)}{d+2m-2}$. Note that $\beta^{(1)}_{d,m}\geq \beta^{(2)}_{d,m}\geq 1$, where the equalities only hold when $m=1$. The following theorem is proved in \cite{dairbekov2006integral}:
\begin{theorem}{\cite[Theorem 1.1]{dairbekov2006integral}}\label{thm1}
Let $d\geq 2$, $m\geq 1$. Suppose $(M,g)$ is a compact simple Riemannian manifold free of $\beta^{(1)}_{d,m}$-conjugate points. Then $I_m$ is $s$-injective.
\end{theorem}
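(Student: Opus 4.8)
The plan is to follow the classical transport-equation and Pestov-identity scheme, arranged so as to track the degree of homogeneity in the velocity. Assume $I_mf=0$. Write $\ell_mf$ for the function $(x,v)\mapsto f_{i_1\cdots i_m}(x)v^{i_1}\cdots v^{i_m}$ on $SM$, and likewise $\ell_{m-1}$ for the analogous map on symmetric $(m-1)$-tensors; let $\phi_t$ denote the geodesic flow on $SM$, so $\phi_t(x,v)=(\gamma_{x,v}(t),\dot\gamma_{x,v}(t))$, with generating vector field $X$, and set $u(x,v)=\int_{0}^{\tau(x,v)}(\ell_mf)(\phi_t(x,v))\,\d t$ for $(x,v)\in SM$. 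Then $Xu=-\ell_mf$ on $SM$; $u$ vanishes on $\partial_-(SM)=\{(x,v)\in SM:\ x\in\partial M,\ \langle v,\nu\rangle_g\ge 0\}$, where $\tau=0$; and since $u|_{\partial_+(SM)}=I_mf=0$ we get $u|_{\partial(SM)}=0$. Because $(M,g)$ is simple, $u\in C^\infty(SM)$; this is the standard regularity statement for the transport equation on a simple manifold, and is the only point where the hypothesis $I_mf=0$, rather than mere smoothness of $f$, is used. Hence it suffices to prove the following: if $u\in C^\infty(SM)$, $u|_{\partial(SM)}=0$ and $Xu=-\ell_mf$ for a symmetric $m$-tensor $f$, then $f=\d^sh$ with $h|_{\partial M}=0$.

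Expand $u=\sum_{k\ge0}u_k$ into fibrewise spherical harmonics, with $u_k$ in the eigenspace $\Omega_k$ of the vertical Laplacian of eigenvalue $k(k+d-2)$ (a priori this expansion does not terminate), and split $X=X_++X_-$, where $X_\pm\colon\Omega_k\to\Omega_{k\pm1}$. Since $\ell_mf$ has nonzero components only in degrees $k$ with $0\le k\le m$ and $k\equiv m\pmod 2$, the transport equation reads $X_+u_{k-1}+X_-u_{k+1}=-(\ell_mf)_k$ for every $k$; in particular $X_+u_{k-1}+X_-u_{k+1}=0$ for all $k\ge m+1$, and $u_k=0$ whenever $k\not\equiv m-1\pmod 2$. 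It therefore suffices to establish $(\ast)$: $u_k=0$ for all $k\ge m$. Granting $(\ast)$, the nonzero components of $u$ have degree $\le m-1$ and parity $m-1$, so $u=\ell_{m-1}h$ for a unique symmetric $(m-1)$-tensor $h$, and $h|_{\partial M}=0$ because $u$ vanishes on every fibre over $\partial M$. Since $X\ell_{m-1}=\ell_m\d^s$ while $\ell_m$ is injective on $SM$ (a homogeneous polynomial vanishing on every unit sphere is identically zero), the equation $Xu=-\ell_mf$ gives $\ell_m\d^sh=-\ell_mf$, hence $f=\d^s(-h)$ with $(-h)|_{\partial M}=0$ — precisely the asserted $s$-injectivity.

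The crux, and the step I expect to be by far the hardest, is $(\ast)$. Here I would bring in the Pestov (energy) identity on $SM$: for $w\in C^\infty(SM)$ with $w|_{\partial(SM)}=0$ there is an integration-by-parts identity relating $\|X(\vnabla w)\|^2$, $\|\vnabla(Xw)\|^2$, $\|Xw\|^2$ and the curvature pairing $\langle R\,\vnabla w,\vnabla w\rangle$, up to a boundary term that is harmless by strict convexity of $\partial M$ and the vanishing of $w$ there. Two ingredients convert this into a proof of $(\ast)$. First, the relations $X_+u_{k-1}+X_-u_{k+1}=0$ for $k\ge m+1$ allow one — after applying the identity (in a form adapted to the component structure of $u$) and sorting the resulting terms by homogeneity degree — to isolate a sum over $k\ge m$ of manifestly nonnegative quantities that is forced to be $\le 0$. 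Second, the curvature pairing is recast by Santal\'o's formula as a fibre integral over $\partial_+(SM)$ of index forms $\int_0^{\tau}(|D_tY|^2-\beta\,\langle R(Y,\dot\gamma)\dot\gamma,Y\rangle)\,\d t$ of the type attached to \eqref{jacobi_eq}, evaluated on the vector fields obtained by transporting the vertical gradient of $u$ along the geodesics — which vanish at the endpoints because $u$ vanishes over $\partial M$ — the weight $\beta$ that ends up in front of the curvature being dictated by the eigenvalues $k(k+d-2)$ and the combinatorics of $X_\pm$ and $\vnabla$. The hypothesis that $(M,g)$ is free of $\beta^{(1)}_{d,m}$-conjugate points makes each of these index forms positive definite, which forces the nonnegative quantities in the isolated sum to vanish, giving $(\ast)$.

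The single genuinely delicate point is to track the combinatorial constants so that the positivity needed above is guaranteed by exactly the bound $\beta\le\beta^{(1)}_{d,m}$ — equivalently, to recognize $\beta^{(1)}_{d,m}$ as the supremum, over the relevant degrees $k\ge m$, of the curvature weights appearing in those index forms. This is the computation carried out in \cite{dairbekov2006integral}, and reproducing it — the manipulations with semibasic tensor fields, the vertical and horizontal gradients and divergences, the divergence theorem on $SM$, and the commutator relations among $X$, $\vnabla$, $\hnabla$ and the curvature operator — is where essentially all the work lies; the transport-equation set-up, the smoothness of $u$, and the extraction of $h$ are soft and standard.
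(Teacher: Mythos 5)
Your set-up (transport equation, smoothness of $u$ on a simple manifold, vanishing on $\partial(SM)$, the parity argument, and the extraction of $h$ with $f=\d^s(-h)$ once $u$ has degree $\le m-1$) is correct and standard, and reducing $s$-injectivity to the statement $(\ast)$ is legitimate. But the proof has a genuine gap exactly at the point you yourself identify as the crux: you never derive the energy identity in which the constant $\beta^{(1)}_{d,m}=\frac{m(d+m)}{d+2m-1}$ actually appears, and instead defer "essentially all the work" to the computation in \cite{dairbekov2006integral}. Since the content of the theorem \emph{is} that computation --- the claim that freedom from $\beta^{(1)}_{d,m}$-conjugate points (and not some other bound) suffices --- a proof must produce the inequality $\|X\vnabla u\|^2-\beta^{(1)}_{d,m}(R\vnabla u,\vnabla u)\le 0$ (or an equivalent) with the constants tracked. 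Your sketch of how to get it (Santal\'o's formula converting the curvature pairing into index forms degree by degree, with the weight "dictated by the eigenvalues $k(k+d-2)$") is not substantiated by any computation, and it is not the mechanism by which the constant arises in the actual argument, so one cannot even check that your route would terminate at the right value of $\beta$.

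For comparison, the proof in the paper does not work degree by degree on $u$ at all. One first replaces $f$ by its solenoidal part, so that $\delta f=0$; this yields the key identity $\hdiv\vnabla f+mXf=0$ (Lemma \ref{divergencefree}), which in turn gives the two integral identities \eqref{eq2} and \eqref{identity3} for the solution $u$ of $Xu=-f$, $u\vert_{\partial(SM)}=0$. Because $f$ has spherical-harmonic degree at most $m$, one has the eigenvalue bound $\|\vnabla Xu\|^2=\|\vnabla f\|^2\le m(m+d-2)\|f\|^2$ (see \eqref{est_f}). Forming $\eqref{eq2}+\beta^{(1)}_{d,m}\cdot\eqref{identity3}$, the value $\beta^{(1)}_{d,m}$ is precisely the one for which the coefficient of $\|Xu\|^2$ equals $m(m+d-2)$, so these terms are absorbed and one is left with $\|X\vnabla u\|^2-\beta^{(1)}_{d,m}(R\vnabla u,\vnabla u)\le 0$. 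The conjugate-point hypothesis is then used once, globally, through Lemma \ref{lemma1} applied to the single field $Z=\vnabla u$ (an index-form/Santal\'o argument hidden in that lemma, not a degree-by-degree one), forcing $\vnabla u=0$ and then $f=Xu=0$ by \eqref{identity3}. If you want to keep your formulation via $(\ast)$ and avoid the solenoidal reduction, you would have to supply a genuinely different quantitative argument (closer in spirit to the Beurling-transform analysis of \cite{paternain2015invariant}), and you have not shown that it reproduces the constant $\beta^{(1)}_{d,m}$; as written, the decisive estimate is missing.
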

Then in \cite{paternain2015invariant}, the result is improved to:
\begin{theorem}{\cite[Theorem 1.6]{paternain2015invariant}}\label{thm2}
Let $d\geq 2$, $m\geq 1$. Suppose $(M,g)$ is a compact simple Riemannian manifold free of $\beta^{(2)}_{d,m}$-conjugate points. Then $I_m$ is $s$-injective.
\end{theorem}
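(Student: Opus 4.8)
The plan is to run the transport-equation and Pestov-identity scheme of \cite{dairbekov2006integral,paternain2015invariant}, keeping careful track of the fiberwise spherical-harmonic degrees so that the hypothesis ``free of $\beta^{(2)}_{d,m}$-conjugate points'' enters precisely as positivity of the index form of the modified Jacobi equation \eqref{jacobi_eq}. \emph{Step 1 (transport equation).} Suppose $I_mf=0$ and regard $f$ as the function $f(x,v)=f_{i_1\cdots i_m}(x)v^{i_1}\cdots v^{i_m}$ on $SM$. Since $(M,g)$ is simple -- strictly convex boundary and, through the diffeomorphism property of the exponential maps, no conjugate points -- the exit time $\tau$ is smooth away from the glancing set and $u(x,v)=-\int_0^{\tau(x,v)}f(\phi_t(x,v))\,\d t$, with $\phi_t$ the geodesic flow, extends smoothly to all of $SM$; this is the standard regularity statement for ray transforms on simple manifolds. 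Then $Xu=f$ on $SM$, where $X$ is the geodesic vector field, $u=0$ on $\p_-(SM)$ by construction, and $u=-I_mf=0$ on $\p_+(SM)$, so $u$ vanishes on all of $\p(SM)$. It now suffices to produce a symmetric $(m-1)$-tensor $h$ with $h|_{\bo}=0$ and $f=\d^sh$.

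\emph{Step 2 (fiberwise harmonics).} Use the grading $L^2(SM)=\bigoplus_{k\ge0}\Omega_k$ into fiberwise spherical harmonics and the splitting $X=X_++X_-$ with $X_\pm\colon\Omega_k\to\Omega_{k\pm1}$ and $X_+^*=-X_-$; write $u=\sum_{k\ge0}u_k$ and $f=\sum_k f_k$, where only the $f_k$ with $0\le k\le m$ and $k\equiv m\pmod 2$ are nonzero, $f_m$ being the trace-free part of $f$. Since $f$ has the parity of $m$ while $X$ reverses parity, the part of $u$ collecting the $u_k$ with $k\equiv m\pmod 2$ solves $Xw=0$ with $w|_{\p(SM)}=0$, hence vanishes (integrate along geodesics); thus we may assume $u_k=0$ whenever $k\equiv m\pmod 2$. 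The equation $Xu=f$ now reads $X_+u_{k-1}+X_-u_{k+1}=f_k$ for every $k$, and in particular $X_+u_{k-1}+X_-u_{k+1}=0$ for $k>m$. It therefore suffices to show $u_k=0$ for all $k\ge m+1$: granting this, $u$ has degree $\le m-1$, so $u(x,v)=h_x(v,\dots,v)$ for a unique symmetric $(m-1)$-tensor $h$, necessarily with $h|_{\bo}=0$ because $u|_{\p(SM)}=0$; since the function associated with $\d^sh$ is $X\big(y\mapsto h_y(v,\dots,v)\big)=Xu=f$, we conclude $f=\d^sh$.

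\emph{Step 3 (the sharp coercivity estimate -- the heart).} Apply the Pestov identity to $u$; because $u|_{\p(SM)}=0$ its boundary term -- which would otherwise carry a sign from the strict convexity of $\bo$ -- drops out, leaving
\[
\|\vnabla(Xu)\|^2_{L^2(SM)}=\|X\vnabla u\|^2_{L^2(SM)}-(\mathcal R\,\vnabla u,\vnabla u)_{L^2(SM)}+(d-1)\|Xu\|^2_{L^2(SM)},
\]
where $\mathcal R(x,v)Z=R(Z,v)v$. Decompose $\vnabla u$ by fiberwise degree, write $\|Xw\|^2=\|X_+w\|^2+\|X_-w\|^2$ degree by degree, and balance the $X_+$ against the $X_-$ contributions using weights depending on $d$ and on the degree $k$: this converts the coupling constant $1$ in front of $\mathcal R$ into an effective coupling on the degree-$\ge m$ part of $u$, whose optimal value turns out to be exactly $\beta^{(2)}_{d,m}=\tfrac{m(d+m-1)}{d+2m-2}$, which is where the constant comes from. (The cruder balancing of \cite{dairbekov2006integral} reaches only $\beta^{(1)}_{d,m}$; the improvement in \cite{paternain2015invariant} is a less lossy treatment of the curvature term, packaged there as boundedness of a ``Beurling transform''.) Since $(M,g)$ is free of $\beta^{(2)}_{d,m}$-conjugate points, the index form of \eqref{jacobi_eq} with $\beta=\beta^{(2)}_{d,m}$ is positive definite, uniformly, on vector fields along geodesics vanishing at the endpoints; feeding this into the re-weighted identity together with the source-free relations $X_+u_{k-1}+X_-u_{k+1}=0$ ($k>m$) yields a coercive lower bound $\|X_+w\|\ge c_k\|w\|$ for $w\in\Omega_k$, $k\ge m$ (in particular $X_+$ is injective there), forces $u$ to have finite degree, and then a downward induction from the top degree $N$ -- at which $X_+u_N=0$, whence $u_N=0$ -- brings the degree down to $\le m-1$, which is what Step 2 requires.

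\emph{Main obstacle.} All the difficulty sits in Step 3: producing the Pestov-type identity with the precise $(d,k)$-dependent constants that optimize the effective coupling on the high-degree components to $\beta^{(2)}_{d,m}$ rather than $\beta^{(1)}_{d,m}$ -- which forces one to control the curvature term through the commutator algebra of $X_\pm$, $\vnabla$ and $\hnabla$ instead of a pointwise bound -- together with the functional-analytic bookkeeping: convergence of the fiberwise-harmonic expansion of $u$ and legitimacy of the term-by-term operations (guaranteed by the smoothness of $u$ from Step 1), finiteness of the degree, and the passage from the coercive estimate to the vanishing of all components of $u$ of degree $\ge m$. By contrast Steps 1 and 2, and the final assembly of $u$ into the tensor $h$ with $h|_{\bo}=0$ and $f=\d^sh$, are routine.
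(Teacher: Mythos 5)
The heart of the theorem---why the constant can be pushed down to $\beta^{(2)}_{d,m}$---is exactly the part you do not prove. In Step 3 you assert that some degree-dependent re-weighting of the Pestov identity ``converts the coupling constant $1$ in front of the curvature term into an effective coupling \dots whose optimal value turns out to be exactly $\beta^{(2)}_{d,m}$'', but no weights are specified, no re-weighted identity is written, and no computation is performed; your closing ``Main obstacle'' paragraph concedes that all the difficulty sits there. As written this is a plan, not a proof. Moreover, one concrete link in the plan is broken: a coercive bound $\|X_+w\|\geq c_k\|w\|$ on $\Omega_k$ for $k\geq m$ does not ``force $u$ to have finite degree''. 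A smooth solution of the transport equation has, in general, infinitely many nonzero fiberwise harmonic components, so there is no top degree $N$ from which to run your downward induction; the arguments in \cite{paternain2015invariant} that do use the $X_\pm$/Beurling machinery must control the entire tail $\sum_{k\geq m}u_k$ at once through summed estimates and contraction properties of the Beurling transform, which is precisely the bookkeeping you defer. So Steps 1--2 (reduction to showing $u$ has degree $\leq m-1$) are fine and standard, but the proposal stops short of the actual estimate that distinguishes $\beta^{(2)}_{d,m}$ from $\beta^{(1)}_{d,m}$.

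For comparison, the proof summarized in these notes never decomposes $u$ into fiberwise harmonics at all (only $f$, to get \eqref{est_f}). After the Helmholtz reduction to $\delta f=0$ and $Xu=-f$, $u\vert_{\partial(SM)}=0$, it combines the exact identities \eqref{eq2} and \eqref{identity3} with the elementary nonnegativity $0\leq\|\gamma\vnabla Xu+\hnabla u\|^2$ of \eqref{ineq3}, in the combination $\eqref{eq2}+\beta\,\eqref{identity3}-(\beta-1)\,\eqref{ineq3}$, uses $\|\vnabla Xu\|^2\leq m(m+d-2)\|Xu\|^2$ (valid since $Xu=-f$ has degree $m$), and optimizes the free parameter at $\gamma=1/(m+d-2)$; this yields $\|X\vnabla u\|^2-\beta^{(2)}_{d,m}(R\vnabla u,\vnabla u)\leq 0$, whence $\vnabla u=0$ by Lemma \ref{lemma1} and then $f=0$ by \eqref{identity3}. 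To complete your write-up you must either carry out in full the degree-by-degree estimates you allude to (and repair the finite-degree/induction step, e.g.\ by summing the estimates over all $k\geq m$), or switch to this global three-identity argument, in which the constant $\beta^{(2)}_{d,m}$ emerges from an explicit one-parameter optimization rather than an unspecified re-weighting.
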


The purpose of these notes is to provide a concise and unified presentation of the proofs in the two articles \cite{dairbekov2006integral,paternain2015invariant}. \\

Another powerful method to study the geodesic ray transform in dimension $d\geq 3$ is introduced by Uhlmann and Vasy \cite{UV}, where they proved that a modified version of the normal operator $I_0^*I_0$, localized near a convex boundary point, is elliptic as a ``scattering pseudodifferential operator". This yields the injectivity of $I_0$ under the condition that $(M,g)$ admits a strictly convex function. This condition is extensively studied (see \cite{paternain2019geodesic} and the references therein). The results were then generalized to case $m=1,2$ \cite{stefanov2018inverting} and $m=4$ \cite{de2019inverting}.
\section{Preliminaries}
In this section, we introduce the notations and fundamental integral identities needed for the proof of the main theorems.

It is known that any smooth symmetric $m$-tensor $f$ admits the \textit{Helmholtz decompostion} as
\[
f=f^s+\mathrm{d}^sp
\]
with $p\vert_{\partial M}=0$, and $\delta f^s=0$, where $\delta$ is the divergence operator ($-\delta$ is the formal adjoint of $\mathrm{d}^s$). The tensor fields $f^s$ and $\mathrm{d}^sp$ are called the \textit{solenoidal} and \textit{potential} parts of the tensor $f$. To show the $s$-injectivity of $I_m$, one only needs to show that $I_m$ is injective on solenoidal tensors. So from now on, we assume $\delta f=0$.

The tensor $f$ can be associated with a smooth function on $SM$, still denoted by $f$, in the following way
\[
f(x,v)=f_{i_1i_2\cdots i_m}(x)v^{i_1}v^{i_2}\cdots v^{i_m}.
\]
For any $(x,v)\in SM$, let $\varphi_t(x,v)=(\gamma_{x,v}(t),\dot{\gamma}_{x,v}(t))$ denote the geodesic flow of the Riemannian metric $g$ acting on $SM$. Define
\[
u(x,v)=\int_0^{\tau(x,v)}f(\varphi_t(x,v))\mathrm{d}t.
\]
Denote $X:C^\infty(SM)\rightarrow C^\infty(SM)$ to be the geodesic vector field associated with $\varphi_t$, i.e.,
\[
Xu(x,v)=\partial_tu(\varphi_t(x,v))\vert_{t=0}.
\]
Then if $I_mf=0$, we have
\begin{equation}\label{transport_eq}
Xu=-f,\quad\text{in }SM, \quad u\vert_{\partial(SM)}=0.
\end{equation}

The proof of Theorems \ref{thm1} and \ref{thm2} is based on the energy estimates for the transport equation \eqref{transport_eq} on the unit sphere bundle $SM$. We will use some concepts and notations introduced in \cite{paternain2015invariant} (see also \cite{ilmavirta2019integral} for more details). Consider the natural projection $\pi:\, SM\rightarrow M$. Let $\mathcal{V}$ denote the vertical subbundle of $TSM$ given by $\mathcal{V}=\mathrm{ker}\,\mathrm{d}\pi$. With respect to the Sasaki metric on $SM$, we have the orthogonal splitting of $TSM$ as
\[
TSM=\mathbb{R}X\oplus \mathcal{H}\oplus\mathcal{V}.
\]
For a smooth function $u\in C^\infty(SM)$, using the above splitting, we can write the gradient $\nabla_{SM} u$ as
\[
\nabla_{SM} u=((Xu)X,\,\hnabla u,\,\vnabla u).
\]
Denote by $\mathcal{Z}$ the set of smooth functions $Z:\, SM\rightarrow TM$ such that $Z(x,v)\in T_xM$ and $\langle Z(x,v),v\rangle=0$ for all $(x,v)\in SM$. Then $\vnabla u,\hnabla u\in\mathcal{Z}$. We have $X:C^\infty(SM)\rightarrow C^\infty(SM)$ and $\vnabla,\hnabla:C^\infty(SM)\rightarrow \mathcal{Z}$. We can also let $X$ act on $\mathcal{Z}$ as follows
\[
XZ(x,v):=\frac{DZ(\varphi_t(x,v))}{\mathrm{d}t}\Big\vert_{t=0}.
\]
Note that $X:\mathcal{Z}\rightarrow\mathcal{Z}$ as discussed in \cite{paternain2015invariant}. Let $N$ denote the subbundle of $\pi^*TM$, such that $N_{(x,v)}=\{v\}^\perp$. Then $\mathcal{Z}$ coincides with the smooth sections of the bundle $N$. Using the natural $L^2$-inner product on $N$, and the $L^2$-inner product on $C^\infty(SM)$, one can introduce the formal adjoints of $\vnabla$ and $\hnabla$ as $-\vdiv:\mathcal{Z}\rightarrow C^\infty(SM)$ and $-\hdiv:\mathcal{Z}\rightarrow C^\infty(SM)$. Also $X^*=-X$ whenever acting on $C^\infty(SM)$ or $\mathcal{Z}$. Let $R(x,v):\{v\}^\perp=\{v\}^\perp$ be the operator given by $R(x,v)w=R_x(w,v)v$, where $R_x$ is the Riemannian curvature tensor.\\

 The following commutator formulas are needed to derive a set of integral identities.
\begin{lemma}{\cite[Lemma 2.1]{paternain2015invariant}}\label{commutator}
The following commutator formulas hold:
\[
[X,\vnabla]=-\hnabla,\quad\quad [X,\hnabla]=R\vnabla,\quad\quad \hdiv\vnabla-\vdiv\hnabla=(d-1)X
\]
on $C^\infty(SM)$, and
\[
[X,\vdiv]=-\hdiv,\quad\quad [X,\hdiv]=-\vdiv R
\]
on $\mathcal{Z}$.
\end{lemma}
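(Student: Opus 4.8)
The plan is to prove the three identities on $C^\infty(SM)$ first and then to deduce the two identities on $\mathcal{Z}$ by formal $L^2$-duality. For the identities on $C^\infty(SM)$ I would compute in local coordinates $(x^i)$ on $M$ with the induced fibre coordinates $(v^i)$ on $TM$: extending $u\in C^\infty(SM)$ to $TM\setminus\{0\}$ so that it is homogeneous of degree zero in $v$, the geodesic vector field is $X=v^i(\partial_{x^i}-\Gamma^k_{ij}v^j\partial_{v^k})$, the vertical gradient $\vnabla u$ is the fibre gradient $g^{ij}\partial_{v^j}u$ projected onto $\{v\}^\perp$, and the horizontal gradient $\hnabla u$ is $g^{ij}(\partial_{x^j}-\Gamma^k_{jl}v^l\partial_{v^k})u$, again projected onto $\{v\}^\perp$. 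Equivalently, and somewhat more cleanly, one can argue intrinsically through the horizontal and vertical lifts to $T(TM)$, recalling that $X$ at $(x,v)$ is the horizontal lift of $v$, that $\mathcal{V}_{(x,v)}$ consists of the vertical lifts of $\{v\}^\perp$ and $\mathcal{H}_{(x,v)}$ of their horizontal lifts, and using the classical bracket relations $[Y^{\mathrm h},Z^{\mathrm h}]=[Y,Z]^{\mathrm h}-(R(Y,Z)v)^{\mathrm v}$, $[Y^{\mathrm h},Z^{\mathrm v}]=(\nabla_YZ)^{\mathrm v}$, $[Y^{\mathrm v},Z^{\mathrm v}]=0$.

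Granting either setup, $[X,\vnabla]=-\hnabla$ comes out as the commutator of a horizontal with a vertical direction: differentiating $\vnabla u$ along the flow, the one term in which the vertical derivative lands on the coefficient $v^i$ of $X$ produces precisely a horizontal derivative, the Christoffel terms reorganizing into the covariant horizontal gradient. The identity $[X,\hnabla]=R\vnabla$ is the one where curvature enters: it is the commutator of two horizontal directions, and the antisymmetrized first derivatives of the Christoffel symbols, together with the quadratic $\Gamma$-terms, assemble into the Riemann tensor, giving the operator $w\mapsto R_x(w,v)v$ acting on $\vnabla u$. Finally $\hdiv\vnabla-\vdiv\hnabla=(d-1)X$ follows by taking a fibre trace in the horizontal/vertical commutator, the coefficient $d-1$ being the rank of the bundle $N$, i.e.\ $\dim\{v\}^\perp$. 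I expect the main obstacle to a clean write-up to be the bookkeeping of the orthogonal projections onto $\{v\}^\perp$ and of the degree-zero homogeneity allowing one to pass between $TM\setminus\{0\}$ and $SM$; once these are under control, the appearance of the Riemann tensor in the second identity is just the usual, if somewhat lengthy, Christoffel-symbol manipulation and nothing conceptually new is needed.

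For the two identities on $\mathcal{Z}$ I would not recompute anything but dualize. With respect to the $L^2$-pairing on $C^\infty(SM)$ and the natural $L^2$-pairing on sections of $N$, one has $\vnabla^*=-\vdiv$, $\hnabla^*=-\hdiv$, $X^*=-X$ on $C^\infty(SM)$ and on $\mathcal{Z}$, and $R(x,v)$ is pointwise self-adjoint on $\{v\}^\perp$ by the pair symmetry of the Riemann tensor. Taking the formal adjoints of $[X,\vnabla]=-\hnabla$ and of $[X,\hnabla]=R\vnabla$ then produces the two commutator identities on $\mathcal{Z}$; no boundary terms intervene since the adjoints are purely formal. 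The one point requiring care here is that $X^*=-X$ must be invoked in its form valid on sections of $N$ rather than the scalar version, after which the transfer from $C^\infty(SM)$ to $\mathcal{Z}$ is automatic.
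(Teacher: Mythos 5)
These notes do not prove the lemma at all: it is quoted verbatim from \cite[Lemma 2.1]{paternain2015invariant}, where it is established by exactly the kind of local-coordinate computation you sketch (the coordinate expressions for $X,\vnabla,\hnabla,\vdiv,\hdiv$ that such a computation requires are the ones reproduced in the appendix of these notes). So your plan for the three identities on $C^\infty(SM)$ is the standard one; it is only a sketch, and the real work is precisely the bookkeeping you defer (the projection onto $\{v\}^\perp$, the degree-zero homogenization, and the fact that the factor $d-1$ comes from fibre derivatives hitting the terms $(Xu)v$ and $|y|^{-1}$ rather than from a bare trace of a bracket), but the route is sound. Your duality step for the identities on $\mathcal{Z}$ is also legitimate in principle: commutator identities are local, so it suffices to test against sections compactly supported in the interior, no boundary terms appear, and $R(x,v)$ is indeed self-adjoint on $\{v\}^\perp$ by the pair symmetry of the curvature tensor.

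However, if you actually carry out the adjoint computation you will not land on the second displayed identity with the sign shown. For $w$ and $Z$ compactly supported in the interior,
\begin{equation*}
([X,\hdiv]Z,\,w)=(Z,\,\hnabla Xw)-(Z,\,X\hnabla w)=-(Z,\,[X,\hnabla]w)=-(RZ,\,\vnabla w)=(\vdiv (RZ),\,w),
\end{equation*}
so dualizing $[X,\hnabla]=R\vnabla$ gives $[X,\hdiv]=+\vdiv R$, while dualizing $[X,\vnabla]=-\hnabla$ does give $[X,\vdiv]=-\hdiv$ as stated. The plus sign is not an artifact of your method: on a surface, with the standard frame relations $[X,V]=X_\perp$, $[X_\perp,V]=-X$, $[X,X_\perp]=-KV$ and the identifications $\vnabla u\leftrightarrow Vu$, $\hnabla u\leftrightarrow -X_\perp u$, $\vdiv\leftrightarrow V$, $\hdiv\leftrightarrow -X_\perp$, $R\leftrightarrow K$ (forced by the other four identities and the Pestov identity \eqref{pestov}), one computes $[X,\hdiv]Z\leftrightarrow -[X,X_\perp]z=KVz\leftrightarrow \vdiv(RZ)$. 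In other words, $[X,\hdiv]=-\vdiv R$ is inconsistent with $[X,\hnabla]=R\vnabla$ and the stated integration-by-parts relations unless $\vdiv R\equiv 0$, which already fails in constant nonzero curvature; the minus sign in the last displayed formula should be a plus (harmless for these notes, since only the identities on $C^\infty(SM)$ are used in the proofs of \eqref{pestov}, \eqref{eq2} and \eqref{identity3}). So: keep the duality argument, but state its conclusion with the correct sign rather than asserting that it reproduces the formula as printed.
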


The following Pestov identity plays a crucial role in the proof of the main result.
\begin{proposition}{\cite[Proposition 2.2 and Remark 2.3]{paternain2015invariant}}
For any $u\in C^\infty(SM)$ with $u\vert_{\partial(SM)}=0$,
\begin{equation}\label{pestov}
\|\vnabla X u\|^2=\|X\vnabla u\|^2-(R\vnabla u,\vnabla u)+(d-1)\|Xu\|^2.
\end{equation}
\end{proposition}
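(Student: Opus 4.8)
The plan is to derive the Pestov identity \eqref{pestov} by the classical ``integration by parts twice'' argument, exploiting the commutator formulas from Lemma \ref{commutator}. The starting point is to expand $\|\vnabla Xu\|^2$ using the first commutator relation $[X,\vnabla]=-\hnabla$, which gives $\vnabla Xu = X\vnabla u - \hnabla u$. Expanding the square,
\[
\|\vnabla Xu\|^2 = \|X\vnabla u\|^2 - 2(X\vnabla u,\hnabla u) + \|\hnabla u\|^2.
\]
So the task reduces to identifying the cross term $(X\vnabla u,\hnabla u)$ and the term $\|\hnabla u\|^2$ in terms of the quantities appearing on the right-hand side of \eqref{pestov}.

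For the cross term, I would integrate by parts moving $X$ across the inner product. Since $X^*=-X$ on $\mathcal{Z}$, we get $(X\vnabla u,\hnabla u) = -(\vnabla u, X\hnabla u)$. Now apply the second commutator formula $[X,\hnabla]=R\vnabla$ to write $X\hnabla u = \hnabla Xu + R\vnabla u$. This yields
\[
(X\vnabla u,\hnabla u) = -(\vnabla u,\hnabla Xu) - (R\vnabla u,\vnabla u).
\]
Next, integrate by parts in the first term on the right using that $-\vdiv$ is the formal adjoint of $\vnabla$: $(\vnabla u,\hnabla Xu) = -(u,\vdiv\hnabla Xu)$. To relate $\vdiv\hnabla$ to something tractable, use the third relation in Lemma \ref{commutator}, $\hdiv\vnabla - \vdiv\hnabla = (d-1)X$, so $\vdiv\hnabla Xu = \hdiv\vnabla Xu - (d-1)X^2u$. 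Integrating back by parts, $(u,\hdiv\vnabla Xu) = -(\hnabla u,\vnabla Xu)$; one also notes $\|\hnabla u\|^2 = -(u,\hdiv\hnabla u)$ and, by similar manipulations, $\|\hnabla u\|^2$ combines with the $\hnabla Xu$-type term. Keeping careful track of these, the terms $\|\hnabla u\|^2$ and the reorganized cross terms collapse, and the $(d-1)X^2u$ contribution becomes $(d-1)(u, X^2 u) = -(d-1)(Xu,Xu) = -(d-1)\|Xu\|^2$ after one more integration by parts — wait, the sign works out so that it contributes $+(d-1)\|Xu\|^2$ to the final identity after all rearrangements. Substituting everything back into the expanded expression for $\|\vnabla Xu\|^2$ produces exactly \eqref{pestov}.

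The main obstacle is bookkeeping: every integration by parts must be justified by checking that boundary terms vanish. This is where the hypothesis $u|_{\partial(SM)}=0$ enters — but one must verify more carefully that the boundary contributions from the $X$, $\vnabla$, $\hnabla$, $\vdiv$, $\hdiv$ operators acting on $\partial(SM)$ all vanish, which relies on the structure of these boundary terms (they involve $u$ and its tangential derivatives along $\partial(SM)$, all controlled by $u|_{\partial(SM)}=0$, together with the divergence theorem on $SM$ with boundary). For the $\vdiv$ and $\hdiv$ integrations by parts the relevant boundary term is an integral over $\partial(SM)$ of $u$ times a contraction of the vertical/horizontal gradient with the conormal; since $u$ vanishes on $\partial(SM)$ these all drop. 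The remark \cite[Remark 2.3]{paternain2015invariant} presumably addresses precisely this point in the manifold-with-boundary setting, extending the closed-manifold Pestov identity; I would cite it for the boundary analysis and present the interior computation in full.
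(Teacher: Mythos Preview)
Your approach is the standard one and agrees with what the paper indicates (the paper does not reproduce the proof but points to repeated integration by parts together with the commutator formulas of Lemma~\ref{commutator}, deferring details to \cite{paternain2015invariant}). One concrete correction, though: from $[X,\vnabla]=-\hnabla$ one obtains $X\vnabla u-\vnabla Xu=-\hnabla u$, hence $\vnabla Xu=X\vnabla u+\hnabla u$, not with a minus sign; the paper itself records this as $X\vnabla u=\vnabla Xu-\hnabla u$ in the proof of the next proposition. With the correct sign the expansion reads
\[
\|\vnabla Xu\|^2=\|X\vnabla u\|^2+2(X\vnabla u,\hnabla u)+\|\hnabla u\|^2,
\]
and your subsequent manipulations then go through cleanly: setting $A=(X\vnabla u,\hnabla u)$ and running your steps one finds $2A=-\|\hnabla u\|^2+(d-1)\|Xu\|^2-(R\vnabla u,\vnabla u)$, which substituted back gives exactly \eqref{pestov}. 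You should also make explicit that $\vnabla u|_{\partial(SM)}=0$: this holds because $u|_{\partial(SM)}=0$ and $\vnabla$ differentiates along the fibres $S_xM$, which lie entirely in $\partial(SM)$ when $x\in\partial M$. That fact is what licenses the step $(X\vnabla u,\hnabla u)=-(\vnabla u,X\hnabla u)$ via the listed rule $(XZ,W)=-(Z,XW)$ for $Z|_{\partial(SM)}=0$.
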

For the proof of the above identity, as well as the integral identities below, one needs to repeatedly use integration by parts: for $u\in C^\infty(SM)$ and $Z\in\mathcal{Z}$ we have
\[
(\vnabla u,Z)=-(u,\vdiv Z);
\]
if $u,w\in C^\infty(SM)$ and $Z\in\mathcal{Z}$ with $u\vert_{\partial(SM)}=0$, we have
\[
(Xu,w)=-(u,Xw),\quad\quad
(\hnabla u,Z)=-(u,\hdiv Z);
\]
if $Z,W\in \mathcal{Z}$ with $Z\vert_{\partial (SM)}=0$, we have
\[
(XZ,W)=-(Z,XW).
\]
For more details, we refer to \cite{paternain2015invariant}.

Consider the vertical Laplacian
\[
\Delta:=-\vdiv\vnabla:\,C^\infty(SM)\rightarrow C^\infty(SM).
\]
Since $\Delta u(x,v)$ coincides with the Laplacian of the function $v\mapsto u(x,v)$ on the manifold $(S_xM,g_x)$, the Hilbert space $L^2(SM)$ admits an orthogonal decomposition
\[
L^2(SM)=\bigoplus_{k\geq 0}H_k(SM),
\]
where $\Delta u_k=k(k+d-2)u_k$ for any $u_k\in \Omega_k:=H_k(SM)\cap C^\infty(SM)$. For any $u\in L^2(SM)$, we write this decomposition as
\[
u=\sum_{k=0}^\infty u_k,\quad u_k\in H_k(SM).
\]

Note that $\Omega_k$ can be identified with the smooth \textit{trace-free} symmetric $k$-tensors. Therefore for $f$ associated with a smooth symmetric $m$-tensor, the tensor decomposition simplifies to
\[
f=\sum_{k=0}^{[m/2]}f_{m-2k}\in\bigoplus_{k=0}^{[m/2]}\Omega_{m-2k}.
\]
For more details see \cite{dairbekov2011conformal}.
We have
\begin{equation}\label{est_f}
\begin{split}
\|\vnabla f\|^2=(\vnabla f,\vnabla f)=(\Delta f,f)&=\sum_{k=0}^{[m/2]}(m-2k)(m-2k+d-2)\|f_{m-2k}\|^2\\
&\leq m(m+d-2)\sum_{k=0}^{[m/2]}\|f_{m-2k}\|^2\\
&= m(m+d-2)\|f\|^2.
\end{split}
\end{equation}
The above inequality resembles \cite[Lemma 5.2]{dairbekov2006integral} (cf. \cite[Lemma 4.5.3]{Shara} also).
Since $f$ is divergence-free, we have the following identity:
\begin{lemma}\label{divergencefree}
If $f$ is a smooth symmetric $m$-tensor satisfying $\delta f=0$, then
\begin{equation}\label{div_identity}
\hdiv\vnabla f+mXf=0.
\end{equation}
\end{lemma}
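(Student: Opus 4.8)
The plan is to compute $\hdiv\vnabla f$ directly as a function on $SM$ using the semibasic tensor calculus (as in \cite{Shara}, and as used in \cite{dairbekov2006integral,paternain2015invariant}), and to check that every term cancels except one that is, up to sign, $\delta f$ contracted with powers of $v$. I first record $\vnabla f$ explicitly. Regarding $f$ as the function $(x,v)\mapsto f_{i_1\cdots i_m}(x)v^{i_1}\cdots v^{i_m}$ on $SM$, its vertical gradient is the intrinsic gradient on the unit sphere $(S_xM,g_x)$ of the polynomial $w\mapsto f_{i_1\cdots i_m}(x)w^{i_1}\cdots w^{i_m}$, evaluated at $w=v$; projecting the ambient $g_x$-gradient onto $\{v\}^{\perp}$ yields
\[
(\vnabla f)_j=m\,f_{j\,i_2\cdots i_m}v^{i_2}\cdots v^{i_m}-m\,f\,v_j,\qquad v_j:=g_{jk}v^k,
\]
the second term being forced by the constraint $|v|=1$ on $SM$; one checks directly that this is $g$-orthogonal to $v$, so it is a genuine section of $N$.

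I would then apply $\hdiv$ using two standard facts about the semibasic calculus. First, for $Z\in\mathcal{Z}$ one has the pointwise identity $\hdiv Z=\hnabla^j Z_j$, where $\hnabla$ now denotes the horizontal covariant derivative on $\pi^*TM$, i.e. the extension of the horizontal gradient to sections of $\pi^*TM$. Second, this $\hnabla$ acts as the Levi-Civita connection $\nabla$ on tensors pulled back from $M$ and annihilates the tautological section, so that $\hnabla_k v_j=0$ and all the factors $v^{i_2}\cdots v^{i_m}$ above are inert under it. Applying $\hnabla^j$ to the displayed formula and using Leibniz,
\[
\hdiv\vnabla f=m\,(\nabla^j f_{j\,i_2\cdots i_m})\,v^{i_2}\cdots v^{i_m}-m\,(\hnabla^j f)\,v_j .
\]
Here $(\hnabla^j f)\,v_j=v^j\hnabla_j f=Xf$, because the horizontal derivative of the scalar $f$ in the flow direction is exactly $Xf$; and $\nabla^j f_{j\,i_2\cdots i_m}$ is, up to sign, $(\delta f)_{i_2\cdots i_m}$, which vanishes by hypothesis. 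Hence $\hdiv\vnabla f=-mXf$, which is \eqref{div_identity}. (Without the hypothesis $\delta f=0$, the same computation gives the pointwise identity $\hdiv\vnabla f+mXf=m\,\delta f$ on $SM$, with $\delta f$ viewed as a function on $SM$ in the usual way.)

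The only nonroutine ingredients are the two facts about the semibasic calculus quoted above — that $\hdiv$ is the horizontal trace of the covariant derivative, and that the covariant derivative restricts to $\nabla$ on basic tensors and kills the tautological section — so, granting these, the argument is a short bookkeeping of conventions. I expect the one place to be careful is retaining the projection term $-m\,f\,v_j$ in $\vnabla f$ and checking that it produces precisely $-mXf$; everything else is the elementary observation that $\hdiv\vnabla f+mXf$ is, up to sign, $\delta f$ contracted with $v^{\otimes(m-1)}$. (Alternatively one could compute $\vdiv\hnabla f$ in the same manner and feed it into the commutator identity $\hdiv\vnabla-\vdiv\hnabla=(d-1)X$ of Lemma~\ref{commutator}, but the direct route above is shorter.)
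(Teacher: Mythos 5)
Your argument is correct, and it reaches the same pointwise identity that the paper's appendix establishes, namely $\hdiv\vnabla f+mXf=m\,\delta f$ (with $\delta f$ regarded as a function on $SM$), but by a genuinely different route. The paper starts from the same explicit expression for $\vnabla f$ and then computes $\hdiv\vnabla f=(\delta_j+\Gamma_j)Z^j$ by brute force in local coordinates, expanding all Christoffel-symbol terms and the derivatives $\partial_{x}g$, $\partial_{x}g^{-1}$ and checking the cancellations by hand. You instead outsource exactly that bookkeeping to the covariant semibasic calculus: the facts that $\hdiv$ is the trace of the horizontal covariant derivative on $\pi^*TM$ (which is precisely the coordinate formula $(\delta_j+\Gamma_j)Z^j$ the paper quotes from \cite{paternain2015invariant}), that this derivative restricts to the Levi-Civita derivative on basic tensors, and that it annihilates the tautological section (so $\hnabla_k v^i=0$, which follows from $\delta_j|y|=0$ and $\delta_jy^i+\Gamma^i_{jk}y^k=0$). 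Granting these standard facts (they are in \cite{Shara} and in the appendix of \cite{paternain2015invariant}), your Leibniz-rule computation is complete and the two terms come out exactly as you claim: the basic part contracts to $(\delta f)_{i_2\cdots i_m}v^{i_2}\cdots v^{i_m}$ (with the paper's sign convention this is $+\delta f$, not merely ``up to sign''), and the projection term $-mf\,v_j$ produces $-mXf$ since $v^j\delta_jf=Xf$. So your proof buys brevity and conceptual clarity at the price of invoking those properties of $\hnabla$ as known, whereas the paper's appendix is self-contained modulo only the coordinate expressions of the operators; both are valid, and your parenthetical general identity without the hypothesis $\delta f=0$ agrees with the last display of the paper's computation.
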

The proof of this lemma follows from tedious calculations, which will be given in the appendix.

\begin{proposition}
For $u$ satisfying $Xu=-f$ with $\delta f=0$ and $u\vert_{\partial(SM)}=0$, we have
\begin{equation}\label{eq2}
\|X\vnabla u\|^2-\|\vnabla Xu\|^2-\|\hnabla u\|^2-2m\|Xu\|^2=0.
\end{equation}
\end{proposition}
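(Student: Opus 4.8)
The plan is to deduce \eqref{eq2} from the Pestov identity \eqref{pestov} by producing an independent evaluation of the curvature term appearing there. Concretely, I will establish the identity
\[
(R\vnabla u,\vnabla u)=\|\hnabla u\|^2+(2m+d-1)\|Xu\|^2 .
\]
Once this is available, substituting it into \eqref{pestov} and using $\|Xu\|^2=\|f\|^2$ gives $\|\vnabla Xu\|^2=\|X\vnabla u\|^2-2m\|Xu\|^2-\|\hnabla u\|^2$, which is precisely \eqref{eq2} after rearranging.

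To prove the displayed identity I would argue in two independent steps and then combine them. First, apply $\vnabla$ to $Xu=-f$: the commutator $[X,\vnabla]=-\hnabla$ from Lemma \ref{commutator} gives $\vnabla Xu=X\vnabla u+\hnabla u$, hence $\|\vnabla Xu\|^2=\|X\vnabla u\|^2+2(X\vnabla u,\hnabla u)+\|\hnabla u\|^2$; comparing this with \eqref{pestov} yields the auxiliary relation $2(X\vnabla u,\hnabla u)+\|\hnabla u\|^2=(d-1)\|Xu\|^2-(R\vnabla u,\vnabla u)$. Second, compute $(X\vnabla u,\hnabla u)$ directly. Since $u$ vanishes on all of $\partial(SM)$, so does its vertical gradient $\vnabla u$, so integration by parts in $X$ applies and $(X\vnabla u,\hnabla u)=-(\vnabla u,X\hnabla u)$; the commutator $[X,\hnabla]=R\vnabla$ rewrites the right side as $-(\vnabla u,\hnabla Xu)-(R\vnabla u,\vnabla u)$. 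For the term $(\vnabla u,\hnabla Xu)$ I integrate $\vnabla$ by parts to get $-(u,\vdiv\hnabla Xu)$, use $\vdiv\hnabla=\hdiv\vnabla-(d-1)X$, and then invoke $Xu=-f$ together with the divergence-free identity $\hdiv\vnabla f=-mXf$ of Lemma \ref{divergencefree}; this produces $\vdiv\hnabla Xu=(m+d-1)Xf$, and one more integration by parts gives $(u,Xf)=-(Xu,f)=\|f\|^2=\|Xu\|^2$. Collecting terms, $(X\vnabla u,\hnabla u)=(m+d-1)\|Xu\|^2-(R\vnabla u,\vnabla u)$. Inserting this into the auxiliary relation eliminates $(X\vnabla u,\hnabla u)$ and isolates exactly the displayed identity, completing the argument.

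Beyond the routine algebra, the one substantive ingredient is Lemma \ref{divergencefree}, which is where the hypothesis $\delta f=0$ enters, and whose proof is deferred to the appendix. The point that most needs care is the combination of precise signs from the commutator formulas of Lemma \ref{commutator} with the validity of each integration by parts: all integrations by parts used here are carried out either against the factor $\vnabla u$, which genuinely vanishes on $\partial(SM)$, or via $(\vnabla u,Z)=-(u,\vdiv Z)$ and $(Xu,w)=-(u,Xw)$, which require nothing beyond $u\vert_{\partial(SM)}=0$. In particular one must avoid integrating $X$ by parts against $\hnabla u$, which need not vanish on the boundary. I expect this bookkeeping of boundary terms and commutator signs to be the only, and rather modest, obstacle.
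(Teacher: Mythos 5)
Your argument is correct, but it runs in the opposite logical direction from the paper's. The paper proves \eqref{eq2} directly in three lines: write $X\vnabla u=\vnabla Xu-\hnabla u$ from $[X,\vnabla]=-\hnabla$, expand the square, and evaluate the single cross term via $(\hnabla u,\vnabla Xu)=-(u,\hdiv\vnabla Xu)=-2m^{-1}\cdot$(nothing more than Lemma \ref{divergencefree} and one IBP in $X$), with no curvature, no Pestov identity, and no boundary condition on $\vnabla u$; it then remarks that \eqref{identity3} is a consequence of \eqref{pestov} and \eqref{eq2}. You instead first establish $(R\vnabla u,\vnabla u)=\|\hnabla u\|^2+(2m+d-1)\|Xu\|^2$ — which is exactly \eqref{identity3} — by a computation essentially identical to the paper's ``from scratch'' proof of its second proposition (IBP in $X$ against $\vnabla u$, the commutator $[X,\hnabla]=R\vnabla$, the relation $\vdiv\hnabla=\hdiv\vnabla-(d-1)X$, and Lemma \ref{divergencefree}), and then feed it back into \eqref{pestov} to extract \eqref{eq2}. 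This is not circular, since \eqref{pestov} is an independent input valid for any $u$ vanishing on $\partial(SM)$, and your boundary bookkeeping is sound: $\vnabla u$ does vanish on $\partial(SM)$ because the vertical derivative acts only along the fibers over boundary points, and the remaining IBPs need only $u\vert_{\partial(SM)}=0$. What your route buys is that it delivers both \eqref{eq2} and \eqref{identity3} simultaneously from Pestov plus one cross-term evaluation; what it costs is extra machinery — the paper's proof of \eqref{eq2} avoids the curvature operator and the Pestov identity entirely, and a direct reader would note that your cross term satisfies $(X\vnabla u,\hnabla u)=(\vnabla Xu,\hnabla u)-\|\hnabla u\|^2$, so the shorter evaluation of $(\vnabla Xu,\hnabla u)=-m\|Xu\|^2$ used by the paper would have sufficed.
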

\begin{proof}
We start with the identity
\[
X\vnabla u=\vnabla Xu-\hnabla u,
\]
which follows Lemma \ref{commutator}.
Then, using \eqref{div_identity}, we have
\[
\begin{split}
\|X\vnabla u\|^2=&\|\vnabla Xu\|^2+\|\hnabla u\|^2-2(\vnabla Xu,\, \hnabla u)\\
=&\|\vnabla Xu\|^2+\|\hnabla u\|^2+2(\hdiv\vnabla Xu,u)\\
=&\|\vnabla Xu\|^2+\|\hnabla u\|^2-2m(XXu,u)\\
=&\|\vnabla Xu\|^2+\|\hnabla u\|^2+2m\|Xu\|^2.
\end{split}
\]
\end{proof}

We will also use
\begin{proposition}
For $u$ satisfying $Xu=-f$ with $\delta f=0$ and $u\vert_{\partial(SM)}=0$, we have
\begin{equation}\label{identity3}
\|\hnabla u\|^2+(d-1+2m)\|Xu\|^2-(R\vnabla u,\,\vnabla u)=0.
\end{equation}
\end{proposition}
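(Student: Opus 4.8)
The plan is to obtain \eqref{identity3} purely by algebraically combining two identities already in hand: the Pestov identity \eqref{pestov} and the identity \eqref{eq2}, both of which apply here since the hypotheses $Xu=-f$, $\delta f=0$, and $u\vert_{\partial(SM)}=0$ are exactly those required by \eqref{eq2} and the boundary condition $u\vert_{\partial(SM)}=0$ is what \eqref{pestov} needs. No new integration by parts or curvature manipulation should be necessary.

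Concretely, I would first rewrite \eqref{pestov} in the form
\[
\|X\vnabla u\|^2-\|\vnabla Xu\|^2=(R\vnabla u,\vnabla u)-(d-1)\|Xu\|^2,
\]
and then observe that \eqref{eq2} gives a second expression for the same left-hand quantity, namely
\[
\|X\vnabla u\|^2-\|\vnabla Xu\|^2=\|\hnabla u\|^2+2m\|Xu\|^2.
\]
Equating the two right-hand sides makes the terms $\|X\vnabla u\|^2$ and $\|\vnabla Xu\|^2$ disappear and yields
\[
(R\vnabla u,\vnabla u)-(d-1)\|Xu\|^2=\|\hnabla u\|^2+2m\|Xu\|^2,
\]
which, after moving everything to one side and combining the two $\|Xu\|^2$ terms, is precisely \eqref{identity3}.

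There is essentially no obstacle at this stage: all the analytic content — the commutator formulas of Lemma \ref{commutator}, the divergence-free identity \eqref{div_identity}, and the integration-by-parts rules with the boundary term vanishing because $u\vert_{\partial(SM)}=0$ — has already been absorbed into \eqref{pestov} and \eqref{eq2}. The only point worth stating carefully in the write-up is that both prior identities are valid under the present hypotheses, so that the subtraction is legitimate; beyond that the argument is a one-line cancellation.
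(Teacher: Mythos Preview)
Your proposal is correct and is exactly the approach the paper itself points to: it states that \eqref{identity3} ``follows directly from \eqref{pestov} and \eqref{eq2}'', which is precisely your subtraction argument. The paper then also supplies an optional ``from scratch'' derivation starting from \eqref{div_identity} and repeatedly applying the commutator formulas of Lemma~\ref{commutator} together with integration by parts, but this is offered as an alternative, not because your route is insufficient.
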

\begin{proof}
The identity \eqref{identity3} follows directly from \eqref{pestov} and \eqref{eq2}. But we still give a ``from scratch" proof here. We start from \eqref{div_identity} and calculate using the commutator formulas in Lemma \ref{commutator}:
\begin{equation*}
\begin{split}
0=&(u,\, \hdiv\vnabla f+mXf)\\
=&-(u,\, \hdiv\vnabla Xu+mXXu)\\
=&(\hnabla u,\, \vnabla Xu)+m(Xu,\,Xu)\\
=&(\hnabla u, \, X\vnabla u +\hnabla u)+m(Xu,\,Xu)\\
=&\|\hnabla u\|^2+m\|Xu\|^2+(\hnabla u,\, X\vnabla u)\\
=&\|\hnabla u\|^2+m\|Xu\|^2-(X\hnabla u,\, \vnabla u)\\
=&\|\hnabla u\|^2+m\|Xu\|^2-(\hnabla Xu,\, \vnabla u)-(R\vnabla u,\,\vnabla u)\\
=&\|\hnabla u\|^2+m\|Xu\|^2+(\vdiv\hnabla Xu,\, u)-(R\vnabla u,\,\vnabla u)\\
=&\|\hnabla u\|^2+m\|Xu\|^2+(\hdiv\vnabla Xu,\, u)+(d-1)(Xu,\, Xu)-(R\vnabla u,\,\vnabla u)\\
=&\|\hnabla u\|^2+m\|Xu\|^2-(mX Xu,\, u)+(d-1)(Xu,\, Xu)-(R\vnabla u,\,\vnabla u)\\
=&\|\hnabla u\|^2+(d-1+2m)\|Xu\|^2-(R\vnabla u,\,\vnabla u).
\end{split}
\end{equation*}
\end{proof}
\begin{remark}
Notice that the formulas \eqref{est_f}, \eqref{div_identity}, \eqref{eq2}, \eqref{identity3} look different than their counterparts \textnormal{Lemma} $5.2$, \textnormal{Lemma} $5.1\,(2)$, $(5.14)$, $(5.6)$ in \cite{dairbekov2006integral}. This is because that we use the operators $\vnabla,\hnabla,\vdiv,\hdiv$ as defined in \cite{paternain2015invariant}, which are different than those used in \cite{dairbekov2006integral}.
\end{remark}

We also need to following lemma (cf. \cite[Remark 11.3]{paternain2015invariant} and \cite[Lemma 5.3]{dairbekov2006integral}).
\begin{lemma}\label{lemma1}
Assume $(M,g)$ is free of $\beta$-conjugate points, then
\[
\|XZ\|^2-\beta(RZ,Z)\geq 0
\]
for every $Z\in\mathcal{Z}$ with $Z\vert_{\partial(SM)}=0$. The equality holds only when $Z=0$.
\end{lemma}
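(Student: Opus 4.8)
The plan is to reduce the inequality $\|XZ\|^2-\beta(RZ,Z)\ge 0$ to a scalar Sturm--Liouville / index-form argument along individual geodesics, which is the standard way the $\beta$-conjugate-point condition enters. First I would recall that for $Z\in\mathcal Z$ with $Z|_{\partial(SM)}=0$, the function $(x,v)\mapsto |XZ(x,v)|^2 - \beta\,\langle R(x,v)Z(x,v),Z(x,v)\rangle$ is integrated over $SM$, and since $SM$ is foliated by the geodesic flow, by the coarea/Santal\'o-type formula the integral decomposes as an integral over $\partial_+(SM)$ of integrals $\int_0^{\tau(x,v)}\big(|D_tY(t)|^2-\beta\langle R_{\gamma(t)}(Y(t),\dot\gamma(t))\dot\gamma(t),Y(t)\rangle\big)\,\d t$, where $Y(t)=Z(\varphi_t(x,v))$ is a vector field along the geodesic $\gamma_{x,v}$, normal to $\dot\gamma$, satisfying $D_tY = (XZ)\circ\varphi_t$ and vanishing at both endpoints because $Z|_{\partial(SM)}=0$.

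Then I would invoke the index form for the modified Jacobi equation \eqref{jacobi_eq}. Being free of $\beta$-conjugate points means exactly that the operator $D_t^2 + \beta R(\,\cdot\,,\dot\gamma)\dot\gamma$ has no conjugate points along any geodesic segment in $M$; by the classical Morse index theorem (applied to the quadratic form $Q(Y)=\int_0^{\tau}(|D_tY|^2-\beta\langle R(Y,\dot\gamma)\dot\gamma,Y\rangle)\d t$ on normal fields vanishing at the endpoints), the absence of conjugate points on $[0,\tau]$ forces $Q$ to be positive semidefinite, and in fact positive definite on nonzero fields vanishing at the endpoints when there is strictly no conjugate point up to and including $t=\tau$. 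Since $(M,g)$ is simple, every maximal geodesic segment has length strictly less than the first $\beta$-conjugate distance (the $\beta$-conjugate-free hypothesis is a closed/open condition and simplicity gives a uniform margin), so $Q(Y)>0$ for every nonzero such $Y$. Integrating over $\partial_+(SM)$ against the (positive) measure from Santal\'o's formula yields $\|XZ\|^2-\beta(RZ,Z)\ge 0$, with equality iff $Y\equiv 0$ along a.e. geodesic, i.e. $Z\equiv 0$.

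Alternatively, and perhaps more in the spirit of \cite{paternain2015invariant}, one can avoid Santal\'o's formula and argue by a direct operator inequality: the condition free of $\beta$-conjugate points is equivalent to the existence of a bounded solution $U$ (a self-adjoint "Riccati"-type operator on $N$) to the matrix Riccati equation $XU + U^2 + \beta R = 0$ along the flow, obtained as the derivative of the modified Jacobi fields; given such $U$, one computes for $Z\in\mathcal Z$ with $Z|_{\partial(SM)}=0$ that
\[
\|XZ\|^2-\beta(RZ,Z) = \|XZ - U Z\|^2 + \big(X(UZ),Z\big) - \big(U Z, XZ\big) + \big((XU+U^2)Z,Z\big) + \beta(RZ,Z) - \beta(RZ,Z),
\]
and after integrating by parts the cross terms (using $Z|_{\partial(SM)}=0$ and $X^*=-X$) together with the Riccati equation collapse to $\|XZ-UZ\|^2\ge 0$, with equality exactly when $XZ=UZ$, which forces $Z=0$ at the boundary to propagate and gives $Z\equiv0$.

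The main obstacle is the rigorous handling of the boundary and of the "strict" part of the conclusion: one must ensure the index form is positive \emph{definite} (not merely semidefinite) on fields vanishing at both endpoints, which requires that $t=\tau(x,v)$ is strictly before the first $\beta$-conjugate point along each geodesic — this is where simplicity is used to upgrade "free of $\beta$-conjugate points" to a uniform gap — and, in the Riccati approach, one must justify existence and boundedness of the global self-adjoint solution $U$ on all of $SM$ and control its behavior as the geodesic approaches $\partial M$. The commutator formulas of Lemma \ref{commutator} and the integration-by-parts rules quoted above are exactly the tools needed to make the cross-term cancellation in the second approach precise.
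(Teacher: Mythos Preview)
The paper does not give its own proof of this lemma; it merely cites \cite[Remark 11.3]{paternain2015invariant} and \cite[Lemma 5.3]{dairbekov2006integral}. Your two sketched approaches are exactly those of the two cited sources: the Santal\'o/index-form argument is Dairbekov's Lemma 5.3, and the Riccati-operator argument is what underlies the Paternain--Salo--Uhlmann treatment (their terminator value $\beta_{\mathrm{Ter}}$ is defined precisely so that a bounded symmetric solution of $XU+U^2+\beta R=0$ exists for $\beta<\beta_{\mathrm{Ter}}$). So your proposal is correct and matches what the paper relies on.

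One small remark: the displayed Riccati identity in your second approach is written with several redundant terms; the clean computation is
\[
\|XZ\|^2-\beta(RZ,Z)=\|XZ-UZ\|^2-\big((XU+U^2+\beta R)Z,\,Z\big)=\|XZ-UZ\|^2,
\]
obtained from $2(XZ,UZ)=-((XU)Z,Z)$ via $X^*=-X$ and self-adjointness of $U$. The equality case $XZ=UZ$ with $Z|_{\partial(SM)}=0$ then forces $Z\equiv 0$ by uniqueness for this linear ODE along each orbit, exactly as you say.
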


\section{Proof of Theorem \ref{thm1} and Theorem \ref{thm2}}
Now we are ready to present the proof of the main theorems. We start with the transport equation \eqref{transport_eq} with $\delta f=0$, and show that $f=0$.
\begin{proof}[Proof of Theorem \ref{thm1}]
Take $\eqref{eq2}+\eqref{identity3}\times\beta^{(1)}_{d,m}$, we obtain
\[
\begin{split}
0=&\|X\vnabla u\|^2-\beta^{(1)}_{d,m} (R\vnabla u,\,\vnabla u)+(\beta^{(1)}_{d,m}-1)\|\hnabla u\|^2-\|\vnabla Xu\|^2-(2m-\beta^{(1)}_{d,m}(2m+d-1))\|Xu\|^2\\
=&\|X\vnabla u\|^2-\beta^{(1)}_{d,m}(R\vnabla u,\,\vnabla u)+(\beta^{(1)}_{d,m}-1)\|\hnabla u\|^2-\|\vnabla Xu\|^2+m(m+d-2)\|Xu\|^2.
\end{split}
\]
Notice that $\beta^{(1)}_{d,m}-1\geq 0$ and recall that (cf. \eqref{est_f})
\[
m(m+d-2)\|Xu\|^2-\|\vnabla Xu\|^2\geq 0.
\]
We have
\begin{equation}\label{ineqR}
\|X\vnabla u\|^2-\beta^{(1)}_{d,m}(R\vnabla u,\,\vnabla u)\leq 0,
\end{equation}
and therefore $\vnabla u=0$ by Lemma \ref{lemma1} if $(M,g)$ is free of $\beta^{(1)}_{d,m}$-conjugate points. Then using \eqref{identity3}, we get $f=Xu=0$.
\end{proof}

Next, we improve the result to Theorem \ref{thm2} using one more trick.

\begin{proof}[Proof of Theorem \ref{thm2}]
We will use the following inequality:
\begin{equation}\label{ineq3}
\begin{split}
0\leq \|\gamma\vnabla Xu+\hnabla u\|^2=&\gamma^2\|\vnabla Xu\|^2+\|\hnabla u\|^2+2\gamma( \vnabla Xu,\hnabla u)\\
=&\gamma^2\|\vnabla Xu\|^2+\|\hnabla u\|^2-2\gamma (\hdiv\vnabla Xu,u)\\
=&\gamma^2\|\vnabla Xu\|^2+\|\hnabla u\|^2-2\gamma m\|X u\|^2.
\end{split}
\end{equation}
for any real number $\gamma$.
Taking $\eqref{eq2}+\eqref{identity3}\times\beta-\eqref{ineq3}\times (\beta-1)$ with $\beta\geq 1$, we obtain
\[
0\geq\|X\vnabla u\|^2-\beta (R\vnabla u,\,\vnabla u)-((\beta-1)\gamma^2+1)\|\vnabla Xu\|^2+(-2m+2m\beta+d\beta-\beta+2\gamma (\beta-1)m)\|Xu\|^2
\]
for any real $\gamma$ and $\beta\geq 1$.
Notice that if
\[
m(m+d-2)((\beta-1)\gamma^2+1)\leq (-2m+2m\beta+d\beta-\beta+2\gamma (\beta-1)m),
\]
or equivalently,
\[
\beta\geq 1+\frac{m(m+d-2)-d+1}{2m\gamma-\gamma^2m(m+d-2)+2m+d-1}:=\beta_{d,m}(\gamma),
\]
we have
\[
-((\beta-1)\gamma^2+1)\|\vnabla Xu\|^2+(-2m+2m\beta+d\beta-\beta+2\gamma (\beta-1)m)\|Xu\|^2\geq 0.
\]
Here $2m\gamma-\gamma^2m(m+d-2)+2m+d-1$ (which is quadratic in $\gamma$) needs to be positive.
Notice that $\beta_{d,m}(\gamma)\in[1,\infty)$ attains the minimal value $\frac{m(m+d-1)}{2m+d-2}=\beta^{(2)}_{d,m}$ at $\gamma=\frac{1}{m+d-2}:=\gamma_{d,m}$. Thus
\[
\|X\vnabla u\|^2-\beta^{(2)}_{d,m}(R\vnabla u,\,\vnabla u)\leq 0.
\]
Notice that the above inequality improves \eqref{ineqR}. Arguing as in the proof of Theorem \ref{thm1}, one can finish the proof.
\end{proof}

\appendix

\section{Proof of Lemma \ref{divergencefree}}
Let $x=(x^1,x^2,\cdots, x^d)$ be local coordinates in $M$, and $(x,y)$ be the associated local coordinates on $TM$, where the tangent vector looks like $y=y^i\frac{\partial}{\partial x^i}$. In the local coordinates, the divergence operator $\delta$ is defined as
\[
(\delta f)_{i_1i_2\cdots i_{m-1}}:=f_{i_1i_2\cdots i_m;\,j}g^{i_mj}=(\partial_{x^j}f_{i_1i_2\cdots i_m}-\sum_{s=1}^m\Gamma^p_{ji_s}f_{i_1\cdots i_{s-1}pi_{s+1}\cdots i_m})g^{i_{m}j}.
\]
Thus $\delta f=0$ implies
\[
\begin{split}
0=\delta f(x,v)=(\delta f)_{i_1i_2\cdots v_{m-1}}v^{i_1}v^{i_2}\cdots v^{i_{m-1}}=&\partial_{x^j}f_{i_1i_2\cdots i_{m-1}i_m}g^{i_mj}v^{i_1}v^{i_2}\cdots v^{i_{m-1}}\\
&-\Gamma_{kj}^pg^{kj}f_{i_1i_2\cdots i_{m-1}p}v^{i_1}v^{i_2}\cdots v^{i_{m-1}}\\
&-(m-1)\Gamma^p_{ji_{m-1}}f_{i_1i_2\cdots i_{m-2}pi_{m}}g^{i_mj}v^{i_1}v^{i_2}\cdots v^{i_{m-1}}.
\end{split}
\]
For $u\in C^\infty(SM)$, we have the following form of the vector fields $X,\vnabla,\hnabla$ as in \cite[Appendix A]{paternain2015invariant}:
\[
\begin{split}
&Xu=v^j\delta_ju,\\
&\vnabla u=(\partial^ku)\partial_{x^k},\\
&\hnabla u=(\delta^ju-(v^k\delta_ku)v^j)\partial_{x^j},
\end{split}
\]
where
\[
\begin{split}
&\delta_ju=(\partial_{x^j}-\Gamma^\ell_{jk}y^k\partial_{y^\ell})u(x,\frac{y}{|y|})\vert_{SM}=:\delta_{x^j}u(x,\frac{y}{|y|})\vert_{SM},\\
&\partial_ku=\partial_{y^k}u(x,\frac{y}{|y|})\vert_{SM}.
\end{split}
\]
Here $|y|=|y|_g=\sqrt{g_{\alpha\beta}y^\alpha y^\beta}$.
On $Z=Z^j\partial_{x^j}\in\mathcal{Z}$, the operators $X,\vdiv,\hdiv$ are given as
\[
\begin{split}
&XZ(x,v)=(XZ^j)\partial_{x^j}+\Gamma_{jk}^\ell v^jZ^k\partial_{x^\ell},\\
&\vdiv Z=\partial_jZ^j,\\
&\hdiv Z=(\delta_j+\Gamma_j)Z^j.\\
\end{split}
\]
Here $\Gamma_j=\Gamma_{jk}^k$.

Using the above expressions in local coordinates, we calculate
\[
\begin{split}
\partial_{y^\ell}(f_{i_1i_2\cdots i_m}|y|^{-m}y^{i_1}y^{i_2}\cdots y^{i_m})=&mf_{i_1i_2\cdots i_{m-1}\ell}|y|^{-m}y^{i_1}y^{i_2}\cdots y^{i_{m-1}}\\
&-mf_{i_1i_2\cdots i_m}|y|^{-m-2}y^{i_1}y^{i_2}\cdots y^{i_m}y_\ell,
\end{split}
\]
and
\[
\begin{split}
\delta_{x^j}(f_{i_1i_2\cdots i_m}|y|^{-m}y^{i_1}y^{i_2}\cdots y^{i_m})=&\partial_{x^j}f_{i_1i_2\cdots i_m}|y|^{-m}y^{i_1}y^{i_2}\cdots y^{i_m}\\
&-m\Gamma_{jk}^\ell f_{i_1i_2\cdots i_{m-1}\ell}|y|^{-m}y^{i_1}y^{i_2}\cdots y^{i_{m-1}}y^k\\
&+m\Gamma_{jk}^\ell f_{i_1i_2\cdots i_m}|y|^{-m-2}y^{i_1}y^{i_2}\cdots y^{i_m}y_\ell y^k\\
&-\frac{m}{2}|y|^{-m-2}\partial_{x^k}g_{\alpha\beta}y^{\alpha}y^\beta(f_{i_1i_2\cdots i_m}y^{i_1}y^{i_2}\cdots y^{i_m}).
\end{split}
\]
Then we have
\[
\begin{split}
Xf=v^j\delta_jf=&\partial_{x^j}f_{i_1i_2\cdots i_m}v^{i_1}v^{i_2}\cdots v^{i_m}v^j-m\Gamma_{jk}^\ell f_{i_1i_2\cdots i_{m-1}\ell}v^{i_1}v^{i_2}\cdots v^{i_{m-1}}v^kv^j\\
&+m\Gamma_{jk}^\ell f_{i_1i_2\cdots i_m}v^{i_1}v^{i_2}\cdots v^{i_m}v_\ell v^kv^j-\frac{m}{2}f_{i_1i_2\cdots i_m}\partial_{x_k}g_{\alpha\beta}v^{\alpha}v^\beta v^{i_1}v^{i_2}\cdots v^{i_m}v^k,
\end{split}
\]
and
\[
\vnabla f=(mf_{i_1i_2\cdots i_{m-1}\ell}v^{i_1}v^{i_2}\cdots v^{i_{m-1}}g^{\ell k}-mf_{i_1i_2\cdots i_m}v^{i_1}v^{i_2}\cdots v^{i_m}v^k)\partial_{x^k}.
\]
Denote $\vnabla f=Z^k\partial_{x^k}$, where $Z^k=mf_{i_1i_2\cdots i_{m-1}\ell}v^{i_1}v^{i_2}\cdots v^{i_{m-1}}g^{\ell k}-mf_{i_1i_2\cdots i_m}v^{i_1}v^{i_2}\cdots v^{i_m}v^k$. Then we calculate
\[
\begin{split}
\delta_jZ^j=&m\partial_{x^j}f_{i_1i_2\cdots i_{m-1}\ell}v^{i_1}v^{i_2}\cdots v^{i_{m-1}}g^{\ell j}+mf_{i_1i_2\cdots i_{m-1}\ell}v^{i_1}v^{i_2}\cdots v^{i_{m-1}}\partial_{x_j}g^{\ell j}\\
&-m\partial_{x^j}f_{i_1i_2\cdots i_m}v^{i_1}v^{i_2}\cdots v^{i_m}v^j-m(m-1)\Gamma_{jq}^pv^qf_{i_1i_2\cdots i_{m-2}p\ell}v^{i_1}v^{i_2}\cdots v^{i_{m-2}}g^{\ell j}\\
&+m(m-1)\Gamma_{jq}^pv^qf_{i_1i_2\cdots i_{m-1}\ell}v^{i_1}v^{i_2}\cdots v^{i_{m-1}}g^{\ell j}v_p+m^2\Gamma_{jq}^pv^qf_{i_1i_2\cdots i_{m-1}p}v^{i_1}v^{i_2}\cdots v^{i_{m-1}}v^j\\
&+m\Gamma_{jq}^jv^qf_{i_1i_2\cdots v_m}v^{i_1}v^{i_2}\cdots v^{i_m}-m(m+1)\Gamma_{jq}^pv^qf_{i_1i_2\cdots i_m}v^{i_1}v^{i_2}\cdots v^{i_m}v^jv_p\\
&-\frac{m(m-1)}{2}f_{i_1i_2\cdots i_{m-1}\ell}v^{i_1}v^{i_2}\cdots v^{i_{m-1}}g^{\ell j}(\partial_{x^j}g_{\alpha\beta})v^\alpha v^\beta\\
&+\frac{m(m+1)}{2}f_{i_1i_2\cdots i_m}v^{i_1}v^{i_2}\cdots v^{i_m}v^j(\partial_{x^j}g_{\alpha\beta})v^\alpha v^\beta,
\end{split}
\]
and
\[
\Gamma_jZ^j=m\Gamma_jf_{i_1i_2\cdots i_{m-1}\ell}v^{i_1}v^{i_2}\cdots v^{i_{m-1}}g^{\ell j}-m\Gamma_jf_{i_1i_2\cdots i_m}v^{i_1}v^{i_2}\cdots v^{i_m}v^j.
\]
Using the facts $\partial_{x^j}g_{\alpha\beta}=\Gamma_{\alpha j}^ig_{i\beta}+\Gamma_{\beta j}^ig_{i\alpha}$ and $\partial_{x^j}g^{j\ell}=-\Gamma_pg^{p\ell}-\Gamma_{kp}^\ell g^{kp}$, we obtain
\[
\begin{split}
&\hdiv\vnabla f+mXf\\
=&-m\Gamma_{jq}^pv^qf_{i_1i_2\cdots i_m}v^{i_1}v^{i_2}\cdots v^{i_m}v^jv_p+mf_{i_1i_2\cdots i_m}v^{i_1}v^{i_2}\cdots v^{i_m}v^j\Gamma_{\alpha j}^ig_{i\beta}v^\alpha v^\beta\\
&+m\Gamma_jf_{i_1i_2\cdots i_{m-1}\ell}v^{i_1}v^{i_2}\cdots v^{i_{m-1}}g^{\ell j}-mf_{i_1i_2\cdots i_{m-1}\ell}v^{i_1}v^{i_2}\cdots v^{i_{m-1}}(\Gamma_pg^{\ell p}+\Gamma_{pk}^\ell g^{kp})\\
&+m\partial_{x^j}f_{i_1i_2\cdots i_{m-1}\ell}v^{i_1}v^{i_2}\cdots v^{i_{m-1}}g^{\ell j}-m(m-1)\Gamma_{jq}^pv^qf_{i_1i_2\cdots i_{m-2}p\ell}v^{i_1}v^{i_2}\cdots v^{i_{m-2}}g^{\ell j}\\
=&m(\delta f)_{i_1i_2\cdots i_{m-1}}v^{i_1}v^{i_2}\cdots v^{i_{m-1}}\\
=&0.
\end{split}
\]

\bibliographystyle{abbrv}
\bibliography{biblio}
\end{document}